\definecolor{sina}{rgb}{0.0,0.0,0.0} 
\definecolor{sinadelete}{rgb}{0.0,0.0,0.0} 
\definecolor{mat}{rgb}{0.0,0.0,0.0} 
\definecolor{sofya}{rgb}{0.0,0.0,0.0} 
\def\sofya{\textcolor{sofya}}
\definecolor{sigrid}{rgb}{0.0,0.0,0.0} 
\definecolor{ssigrid}{rgb}{0.0,0.0,0.0} 
\def\rodrigo{\textcolor{black}}
\def\flora{\textcolor{black}}
\DeclareOldFontCommand{\rm}{\normalfont\rmfamily}{\mathrm}
\DeclareOldFontCommand{\sf}{\normalfont\sffamily}{\mathsf}
\DeclareOldFontCommand{\tt}{\normalfont\ttfamily}{\mathtt}
\DeclareOldFontCommand{\bf}{\normalfont\bfhttps://www.overleaf.com/project/62022d046a9b74af4550fc2aseries}{\mathbf}
\DeclareOldFontCommand{\it}{\normalfont\itshape}{\mathit}
\DeclareOldFontCommand{\sl}{\normalfont\slshape}{\@nomath\sl}
\DeclareOldFontCommand{\sc}{\normalfont\scshape}{\@nomath\sc}
\let\textquotedbl="
\tikzstyle{every pin}=[
\tikzstyle{block} = [draw, fill=blue!20, rectangle, minimum height=3em, minimum width=6em]
\tikzstyle{sum} = [draw, fill=blue!20, circle, node distance=1cm]
\tikzstyle{input} = [coordinate]
\tikzstyle{output} = [coordinate]
\tikzstyle{pinstyle} = [pin edge={to-,thin,black}]
\newcommand*\linenomathpatch[1]{%
  \cspreto{#1}{\linenomath}%
  \cspreto{#1*}{\linenomath}%
  \csappto{end#1}{\endlinenomath}%
  \csappto{end#1*}{\endlinenomath}%
}
\newcommand*\linenomathpatchAMS[1]{%
  \cspreto{#1}{\linenomathAMS}%
  \cspreto{#1*}{\linenomathAMS}%
  \csappto{end#1}{\endlinenomath}%
  \csappto{end#1*}{\endlinenomath}%
}
  \let\linenomathAMS\linenomathWithnumbers
  \patchcmd\linenomathAMS{\advance\postdisplaypenalty\linenopenalty}{}{}{}
  \let\linenomathAMS\linenomathNonumbers
\pgfplotsset{compat=newest}
\pgfplotsset{plot coordinates/math parser=false}
\newlength\figureheight
\newlength\figurewidth
\ltdsetup{\today}{}{}
\renewcommand\refname{Bibliography concerning the state of the art, the research objectives, and the work program}
\newtheorem{theorem}{Theorem}[section]
\newtheorem{proposition}[theorem]{Proposition}
\newtheorem{example}[theorem]{Example}
\newtheorem{remark}[theorem]{Remark}
\title{A new Lagrangian approach to control affine systems with a quadratic
Lagrange term}
\date{\today}
\author
{Sigrid Leyendecker\footnote{Friedrich-Alexander-Universität Erlangen-Nürnberg (FAU), Institute of Applied Dynamics (LTD), Immerwahrstrasse 1, 91058 Erlangen, Germany. Email: \href{mailto:sigrid.leyendecker@fau.de}{sigrid.leyendecker@fau.de}}\ 
\qquad 
 Sofya Maslovskaya\footnote{\textit{First author}. Universität Paderborn (UPB), Numerical Mathematics and Control (NMC), Warburger Straße 100, 33098 Paderborn, Germany. Email: \href{mailto:sofya.maslovskaya@upb.de}{sofya.maslovskaya@upb.de}}\ \thanks{The work of this author has been supported by Deutsche Forschungsgemeinschaft (DFG), Grant No. OB 368/5-1, AOBJ: 692093}
\\
Sina Ober-Bl\"obaum\footnote{  Universität Paderborn (UPB), Numerical Mathematics and Control (NMC), Warburger Straße 100, 33098 Paderborn, Germany. Email: \href{mailto:sinaober@math.uni-paderborn.de}{sinaober@math.uni-paderborn.de}}\
\qquad
Rodrigo T.~Sato Mart{\'\i}n de Almagro\footnote{\textit{First author}. Friedrich-Alexander-Universität Erlangen-Nürnberg (FAU), Institute of Applied Dynamics (LTD), Immerwahrstrasse 1, 91058 Erlangen, Germany. Email: \href{mailto:rodrigo.t.sato@fau.de}{rodrigo.t.sato@fau.de}}\
 \\
Flóra Orsolya Szemenyei\footnote{\textit{First author}, \textit{corresponding author}. Friedrich-Alexander-Universität Erlangen-Nürnberg (FAU), Institute of Applied Dynamics (LTD), Immerwahrstrasse 1, 91058 Erlangen, Germany. Email: \href{mailto:flora.szemenyei@fau.de}{flora.szemenyei@fau.de}}\ \thanks{The work of this author has been supported by Deutsche Forschungsgemeinschaft (DFG), Grant No. LE 1841/12-1, AOBJ: 692092.}
}
\begin{document}
\maketitle

\footnotetext{{\textit{ Math Subject Classifications. Primary:}} 65K10, 49M25.   {\textit{ Secondary:}} 65K15. }

\footnotetext{\textit{Keywords and Phrases.} Optimal control problem, Lagrangian system, Hamiltonian system, Variations, Pontryagin's maximum principle.}

\footnotetext{This article has been  accepted for publication in a revised form in Journal of Computational Dynamics \url{https://www.aimsciences.org/jcd}. This version is free to download for private research and study only.  Not for redistribution, re-sale or use in derivative works.}

\section*{Abstract}
In this work, we consider optimal control problems for mechanical systems with fixed initial and free final
state and a quadratic Lagrange term. Specifically, the dynamics is described by a second order ODE
containing an affine control term. Classically, Pontryagin’s maximum principle gives necessary optimality
conditions for the optimal control problem. For smooth problems, alternatively, a variational approach
based on an augmented objective can be followed. Here, we propose a new Lagrangian approach leading to
equivalent necessary optimality conditions in the form of Euler-Lagrange equations. Thus, the differential
geometric structure (similar to classical Lagrangian dynamics) can be exploited in the framework of
optimal control problems. In particular, the formulation enables the symplectic discretisation of the
optimal control problem via variational integrators in a straightforward way.

\section{Introduction}
The optimal control of mechanical problems is omnipresent in our technically affected daily living as well as in many scientific questions. {These problems have a rather rich geometric structure. The underlying uncontrolled system frequently lives on a manifold $\mathcal{M}$ that admits a natural symplectic structure, \flora{such as in} the case of Hamiltonian or regular Lagrangian mechanical systems. Moreover, the associated optimal control problem evolves on $T^* \mathcal{M}$, which always admits a symplectic structure. This hierarchy of structures is even more critical in the fully-actuated problem, where one naturally arrives at higher-order mechanical problems \cite{deLeonRodrigues85}, \cite{Colombo2016}}, \cite{treanta14}.\\
The symplectic structure of optimal control problems also plays a major role in analysing numerical methods for the approximation of solutions.
In principle, numerical solution methods for optimal control problems can be classified into direct and indirect methods (see \cite{formalskii10}, \cite{betts2010}).
%
The main difference between the two approaches is the order in which the discretisation and the optimisation steps take place. 
{The indirect approach (first optimise, then discretise) provides necessary optimality conditions given by the adjoint differential equation whereas the direct approach (first discretise, then optimise) yields a discrete version of the adjoint differential equation through the derivation of Karush-Kuhn-Tucker equations (\cite{betts2010,gerdts2003}). The relation between direct and indirect approaches is given by symplectic methods, i.e.~the discrete state and adjoint systems derived by the direct approach is a symplectic discretisation of the continuous state and adjoint system derived in the indirect approach. First works analysing the relationship of direct and indirect approaches for Runge-Kutta methods are e.g.~\cite{hager00}, \cite{bonnans04} and \cite{sanz-serna2015} (see also references therein). Starting directly with a symplectic method in the direct approach for the optimal control of mechanical systems provides a double symplectic scheme (symplectic in the state and symplectic in the state-adjoint equations). This was proven for a particular class of symplectic methods (\cite{ober-blobaum2008, campos15}) by exploiting the hierachy of symplectic structures mentioned above.}   

{The symplectic nature of optimal control problems has motivated many different works over the last years not only with respect to the relation between direct and indirect approaches. Further topics of investigation are e.g.~the geometric interpretation of adjoint systems, the formulation of concise Lagrangians and corresponding variational principles for optimal control problems based on e.g.~higher order Lagrangians or generating functions and associated consistent symplectic discretisation schemes for optimal control problems (see \rodrigo{\cite{deleon2007, leok_tran22, ColomboDeDiegoZucalli}}).}

As a new contribution to this research field, we provide a systematic approach based on a new Lagrangian
formulation and a variational principle involving both, state and adjoint variables of the optimal control problem. More concretely, for a specific class of mechanical optimal control problems (namely control affine systems with a quadratic Lagrange term), we show that the Euler-Lagrange equations of the new Lagrangian provide the classical necessary optimality conditions. Since the Lagrangian is regular, a Hamiltonian can be easily derived based on the Legendre transformation. Furthermore, we investigate invariances of the Lagrangian and associated symmetries in the optimal control problem leading to conserved quantities by Noether's theorem. \rodrigo{Moreover, this new Lagrangian approach opens up the possibility of directly applying variational integrators to the problems considered without needing to involve second-order Lagrangians.}

\section{Preliminaries, notations}

{In this chapter we introduce basic concepts of Hamiltonian and Lagrangian mechanics, optimal control problems and different approaches to derive necessary optimality conditions.}

\subsection{Lagrangian and Hamiltonian mechanics}
\label{ssec:LHmechanics}

{A Lagrangian mechanical system is defined by a pair $(\mathcal{Q}, L)$, where $\mathcal{Q}$ is the configuration manifold of  $\dim \mathcal{Q} = d$ and $L: T \mathcal{Q} \to \mathbb{R}$ is the Lagrangian function of the system. Here, $T \mathcal{Q}$ denotes the tangent bundle of $\mathcal{Q}$, also known as velocity phase space\rodrigo{, with canonical projection $\tau_\mathcal{Q}: T\mathcal{Q} \to \mathcal{Q}$}. Throughout we assume local coordinates $(q^1,...,q^d) = q$ on $\mathcal{Q}$ and adapted coordinates on $T\mathcal{Q}$, $(q^1,...,q^d, \dot{q}^1,...,\dot{q}^d) = (q,\dot{q})$.}
In mechanics, we usually restrict our attention to Lagrangians of the form kinetic energy $T(q, \dot{q})$ minus potential energy $V(q)$. Hamilton's principle requires the action integral
{\begin{equation*}
\int_0^T L(q(t),\dot{q}(t)) \, \mathrm{d} t
\end{equation*}
to be stationary over physical trajectories $q \in C^k([0,T],\mathcal{Q})$, $k \geq 2$, subject to fixed boundary conditions} resulting in the Euler-Lagrange equations of motion. The connection between the Lagrangian and Hamiltonian settings can be achieved by the Legendre transformation. The dynamics in the Hamiltonian setting is defined on the cotangent bundle $\rodrigo{T}^* \mathcal{Q}$ with coordinates 
{$(q^1,...,q^d, p_1,...,p_d) = (q,p)$}
and the Hamiltonian $H:\rodrigo{T}^*\mathcal{Q} \rightarrow \mathbb{R}$ representing the system's energy. Figure \ref{fig:LHmech} depicts the connections. For further details we refer to \cite{arnold78, marsden94}.\\
In this paper we denote the Lagrangian and Hamiltonian of mechanical systems with $L$ and $H$, respectively. Next, we will introduce the Hamiltonian of Pontryagin’s maximum principle and the augmented objective and Lagrangian, which are all denoted by calligraphic letters, namely, $\mathcal{H}$ for the Hamiltonian and $\mathcal{J}$ and $\mathcal{L}$ for the objective and Lagrangian respectively.


\begin{figure}[ht]
	\centering
	\begin{tikzpicture}[thick, scale=1]

	\node[rectangle, align = center, minimum width = 5cm, minimum height = 2cm] (LS) at (-5.0, 0.0) {Lagrangian system \\ $\big(q(t), \dot{q}(t)\big)\in \rodrigo{T}\mathcal{Q}$};

	\node[rectangle, align = center, minimum width = 5cm, minimum height = 2cm] (EL) at (-5.0, -1.2) {Euler-Lagrange equations \\ $-\frac{d}{dt} \partial_{\dot{q}}L +\partial_{q} L=0$};

	\node[rectangle, align = center, minimum width = 5cm, minimum height = 2cm] (HS) at (5.0, 0.0) {Hamiltonian system \\ $\big(q(t), p(t)\big)\in \rodrigo{T}^*\mathcal{Q}$};

	\node[rectangle, align = center, minimum width = 5cm, minimum height = 2cm] (HE) at (5.0, -1.2) {Hamilton's equations \\ $\dot{q} = \partial_{p}H,\quad \dot{p} = -\partial_{q}H$};

	\draw[-latex, very thick] (EL.5) -- (HE.175);
	\draw[-latex, very thick] (HE.185) -- (EL.355);
	
	\node[rectangle, align = center, minimum width = 3cm, minimum height = 2cm] (LT) at (0.0, 0) {Legendre transformation \\ $\mathbb{F}L:\rodrigo{T}\mathcal{Q}\rightarrow\rodrigo{T}^*\mathcal{Q}$\\ {$(q,\dot{q}) \mapsto (q, p = \partial_{\dot{q}}L)$}};

	\end{tikzpicture}
	\caption{Schematic representation of the connection between Lagrangian and Hamiltonian mechanics. Here, $(q(t), \dot{q}(t))$ stands for a solution of the Euler-Lagrange equations corresponding to the Lagrangian $L$, and $(q(t), p(t))$ for a solution of Hamilton's equations corresponding to the Hamiltonian $H$.
 The Legendre transformation $\mathbb{F}L$ connects the Lagrangian and Hamiltonian sides of mechanics and therefore the Euler-Lagrange equations and Hamilton's equations.}
	\label{fig:LHmech}
	\end{figure}
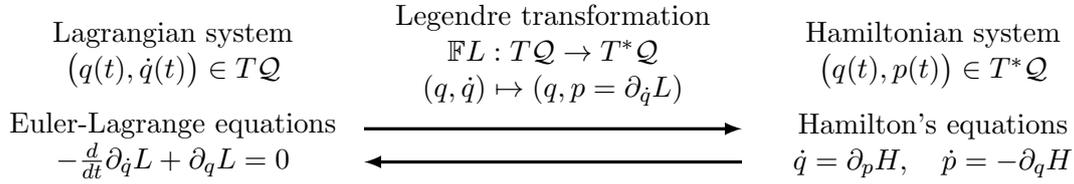

 \subsection{Optimal control problem}
 
\rodrigo{Consider a fibre bundle $\pi: \mathcal{E} \to \mathcal{M}$ with typical fibre $\mathcal{N}$. Locally, a curve on $\mathcal{E}$ parametrized by time $t \in [0, T]$ can be split into $(x(t),u(t))$, where $x(t) \in \mathcal{M}$ is referred to as the state of the system, and $u(t) \in \mathcal{N}$ is referred to as a the control. In the context of control theory, $(\mathcal{E}, \pi, \mathcal{M})$ is frequently a vector bundle, i.e. $\mathcal{N}$ is a vector space, and we assume so throughout. We also assume that $\dim \mathcal{N} \leq \dim \mathcal{M}$. Thus a}n optimal control problem (OCP) on a smooth manifold $\mathcal{M}$ \rodrigo{takes the following local form}
\begin{equation} \label{eq:OCP}
	\begin{aligned}
		& \;\;\;\qquad\underset{u}{\text{min}}  & & J\big(x,u\big) =  \phi\big( x(T)\big)+ \int_0^T \ell\big(x(t), u(t) \big)~dt& \\
		& \text{subject to} & & x(0) \in \mathcal{M}_0,&\\
		& & &x(T) \in \mathcal{M}_T,&\\
		& & & \dot{x}(t)=f(x(t),u(t)),&\\
	\end{aligned}
\end{equation}
The final time $T$ is assumed to be finite, i.e.\ $0 < T < +\infty$ \rodrigo{and fixed throughout}. 
Initial and final constraints on the state are defined by smooth manifolds $\mathcal{M}_0$ and $\mathcal{M}_T$. \rodrigo{The dynamics of the system} is defined by a {continuously differentiable} \rodrigo{vector field along $\pi$,} $f: \mathcal{E} \rightarrow T\mathcal{M}$. The cost functional $J$ consists of an integral over the Lagrange term $\ell$, also called running cost, and the Mayer term \rodrigo{or terminal cost} $\phi$ for the end condition on the state. Both maps $\ell: \mathcal{E} \rightarrow \mathbb{R}$ and \flora{$\phi: \mathcal{M} \rightarrow \mathbb{R}$} are assumed to be {continuously differentiable.} 
In general, the control $u$ is assumed to be a bounded measurable function.
More precisely, we have the following general function spaces,
\begin{gather*}
x\in W^{1,\infty} ([0,T],\mathcal{M}), \,%
 				u\in L^{\infty} ([0,T],\mathcal{N}),\,%
 				\ell \in C^1 (\rodrigo{\mathcal{E}}, \mathbb{R}),\\
 				\phi \in C^1(\mathcal{M},\mathbb{R}),\,%
 				f \in C^1(\rodrigo{\mathcal{E}}, T\mathcal{M})%
 				, \, J \in C^1 (\rodrigo{W^{1,\infty} ([0,T],\mathcal{M}) \times L^{\infty} ([0,T]},\mathcal{N}), \mathbb{R}).
\end{gather*}
 However, 
 later we will see that we need stronger assumptions for our purposes.

 \subsection{Necessary optimality conditions}

Pontryagin's maximum principle (PMP), see \cite{pontryagin1964}, yields necessary optimality conditions for the OCP in \eqref{eq:OCP} in form of a generalised Hamiltonian system and can be stated as follows. If 
{$u$ denotes an optimal solution curve}
of \eqref{eq:OCP} and 
{$x$ denotes}
the associated optimal trajectory, then there exist 
{an adjoint curve $\lambda$ and a multiplier $\lambda_0 \in \mathbb{R}_{-}$ such that $(\lambda,\lambda_0) \neq 0$, and $\rodrigo{(x,\lambda,u,\lambda_0)}$ satisfies}
a generalised Hamiltonian system associated with the control Hamiltonian 
{$\mathcal{H} : \rodrigo{T^* \mathcal{M} \oplus_{\mathcal{M}}  \mathcal{E}} \times \mathbb{R} \to \mathbb{R}$,}
defined by 
\begin{align*}
    \mathcal{H}\rodrigo{(x, \lambda, u, \lambda_0)} = \langle \lambda, f(x,u) \rangle + \lambda_0 \, \ell(x,u),
\end{align*}
 where \rodrigo{$\oplus_{\mathcal{M}}$ denotes the Whitney sum of vector bundles over $\mathcal{M}$ \cite{CrampinPirani86, Saunders89} and} {$\langle \cdot, \cdot \rangle: T^* \mathcal{M} \times T \mathcal{M} \to \mathbb{R}$} is the {canonical} pairing 
 {of covectors and vectors.}
 In local coordinates, it takes the form of {the} standard Euclidean product between vectors. The generalised Hamiltonian system associated to $\mathcal{H}$ provides us with 
differential state equation,
differential adjoint equation and optimality condition
in the following form \begin{equation} \label{eq:PMP}
\begin{cases}
    \dot{x}= \hphantom{-}\partial_{\lambda}\mathcal{H}\rodrigo{(x, \lambda, u, \lambda_0)}, \\ 
    \dot{\lambda}= -\partial_x\mathcal{H}\rodrigo{(x, \lambda, u, \lambda_0)}, \\ 
     0 = \hphantom{-}\partial_u\mathcal{H}\rodrigo{(x, \lambda, u, \lambda_0)}, 
\end{cases}
\end{equation}
where $ (x,\lambda) \in W^{1,\infty}([0,T],T^*\mathcal{M}), \,%
		u\in L^{\infty} ([0,T],\mathcal{N}),\,
	$ and $\mathcal{H}\in C^1(\rodrigo{T^* \mathcal{M} \oplus_{\mathcal{M}} \mathcal{E}} \times \mathbb{R}, \mathbb{R})$.
In addition, the transversality conditions define the relation between state and adjoint variables at the initial and final time as follows
\begin{equation} \label{eq:trasvers}
\lambda(0) \perp T_{x(0)}\mathcal{M}_{0},  \qquad \lambda(T) - \lambda_0 \, \partial_x\phi\big( x(T)\big)  \perp T_{x(T)}\mathcal{M}_{T}.
\end{equation}
A map $t\mapsto(x(t), \lambda(t))$ satisfying the conditions of the PMP is called an extremal. An extremal is called normal if the associated $\lambda_0$ satisfies $\lambda_0 < 0$ and abnormal if $\lambda_0 = 0$. Notice that \eqref{eq:PMP} is invariant under the rescaling of $(\lambda_0, \lambda(t))$ by any positive constant and in {the} case of a normal extremal it is usual to fix $\lambda_0 = -1$. 
Two special cases of interest are the case {where} the initial and final conditions in \eqref{eq:OCP} are fixed, i.e., $x(0) = x_0, \ x(T) = x_T$, and the case {where} the initial condition is fixed, $x(0) = x_0$, and the final state is free, $x(T) \in \mathcal{M}$. 
In the first case, the transversality conditions \eqref{eq:trasvers} are empty because $T_{x(0)}\mathcal{M}_0 = T_{x(T)}\mathcal{M}_T = \{0\}$. In the second case, \eqref{eq:trasvers} implies $\lambda(T) = \lambda_0 \, {\partial_x}\phi\big(x(T)\big)$ and  only $\lambda_0 = -1$ is possible to ensure {$(\lambda,\lambda_0) \neq 0$. Therefore, there are no abnormal extremals in this case.} \\
Let us now consider in more detail the variational augmented objective approach as an alternative technique to derive optimality conditions. Compared to PMP, the variational approach 
{assumes more regularity}
\cite{clarke1990}.  
{It} relies on the introduction of an augmented objective $\mathcal{J}$, 
constructed in such a way that it appends the equality and inequality constraints from~\eqref{eq:OCP} to the objective functional via Lagrange multipliers. The resulting cost can be written in the following form.
\begin{align}
	\mathcal{J}\big(x, \lambda, u\big) &=  \phi\big(x(T)\big)+ \int_0^T \mathcal{L}(x(t), \dot{x}(t),\lambda(t),u(t)) \, dt\nonumber\\
 &= \phi\big(x(T)\big)+ \int_0^T \Big[ \ell\big(x, u \big) + \rodrigo{\left\langle \lambda, \big(\dot{x} - f(x,u) \right\rangle} \Big] ~dt,\label{eqn:augObj}
\end{align}
where the integral term of the augmented objective $\mathcal{J}$ in (\ref{eqn:augObj}), \rodrigo{$\mathcal{L}: T \mathcal{M} \oplus_\mathcal{M} T^* \mathcal{M} \oplus_\mathcal{M} \mathcal{E} \to \mathbb{R}$,} is also called augmented Lagrangian of~\eqref{eq:OCP} (not to \rodrigo{be confused} with the Lagrangian term $\ell$).
 Necessary optimality conditions are derived via the calculus of variations requiring the stationarity of the augmented objective, i.e.,
\begin{align}
\label{stationarityJ}
    \delta \mathcal{J}(x,\lambda, u)=0.
\end{align}
 This variational approach gives equivalent necessary optimality conditions as PMP {for normal cases}, see e.g. \cite[Sec.~3.4,~Sec.~4.1]{liberzon12}, and thus equation \eqref{stationarityJ} 
 again leads to \eqref{eq:PMP} and \eqref{eq:trasvers}.
 However, this approach requires stronger smoothness assumptions on the state and adjoint variables compared to PMP. More precisely, we have to assume 
 \begin{gather} 
     u \in C^1([0,T], \mathcal{N}), \,
			(x,\lambda) \in C^2([0,T],T^*\mathcal{M}),\label{eq:variational.classes1} \\
			f \in C^1(\rodrigo{\mathcal{E}}, T\mathcal{M}),\,%
			\ell \in C^1 (\rodrigo{\mathcal{E}}, \mathbb{R}),\,%
			\phi \in C^1(\mathcal{M}, \mathbb{R}),\label{eq:variational.classes2} 
 \end{gather}
 and hence
 \begin{align*}
     \mathcal{L} \in C^1 (\rodrigo{T\mathcal{M} \oplus_\mathcal{M} T^*\mathcal{M} \oplus_\mathcal{M} \mathcal{E}}, \mathbb{R}) \quad \text{and} \quad \mathcal{J} \in C^1( \rodrigo{C^2([0,T],T^*\mathcal{M}) \times C^1([0,T], \mathcal{N})}, \mathbb{R}),
 \end{align*}
 For a general optimal control problem such as \eqref{eq:OCP}, the Lagrangian $\mathcal{L}$ is \rodrigo{singular} as no time derivative $\dot{\lambda}$ appears. Therefore, a Legendre transformation to the Hamiltonian setting is not well defined. There exist some approaches where a generalised Legendre transformation is defined to overcome this issue. For example, Gotay and Nester's version of Dirac's constraint algorithm \rodrigo{\cite{gotay1979, ColomboDeDiegoZucalli}} provides the correct setting for non-regular Lagrangians due to constraints. However, the algorithm is difficult to use in practice. \\
 The goal of this work is
to derive a new uniform Lagrangian on a proper tangent space
which then directly yields a symplectic integration scheme for
state and adjoint variable if we apply a variational integration scheme {\cite{marsden01}} and from which easily a Hamiltonian can be derived via Legendre
transformation. A key ingredient of the following is the introduction of a new Hamiltonian $\tilde{\mathcal{H}}$ and Lagrangian $\tilde{\mathcal{L}}$ for optimal control problems. Explicit dependence on the control $u$ is denoted by a superscript, e.g.~$\tilde{\mathcal{H}}^u$.

\section{Construction of a new Lagrangian for a control affine system with a specific objective}

In the present work \rodrigo{we focus on a particular} case of the OCP in \eqref{eq:OCP}, where the state equation is described by a second order ordinary differential equation (ODE) as a dynamical constraint. \rodrigo{This is common in the case of controlled mechanical systems. Thus, let us assume $\mathcal{M} = T \mathcal{Q}$, where $\mathcal{Q}$ is the configuration manifold as in Section~\ref{ssec:LHmechanics}. Moreover, 
we assume that $\mathcal{Q}$ is a Riemannian manifold with metric $\mathrm{g}_\mathcal{Q}$. Further, we assume that $\mathcal{E}$ is an anchored vector bundle over $\mathcal{Q}$, $\pi: \mathcal{E} \to \mathcal{Q}$, with injective linear anchor $\rho: \mathcal{E} \to T \mathcal{Q}$, $\tau_\mathcal{Q} \circ 
\rho = \pi$ \cite{PopescuAnchored}. Then the anchor is locally represented by functions $\rho_{\alpha}^i(q)$, $i = 1,...,\dim \mathcal{Q}$, $\alpha = 1, ..., \dim \mathcal{N}$, and the metric  pulls back to a metric $\mathrm{g} \equiv \mathrm{g}_\mathcal{E}$ on $\mathcal{E}$. This lets us consider under-actuated as well as fully-actuated control-affine mechanical systems, whose dynamics take the local form
\begin{equation}
    \label{eq:SODE}
    \ddot{q}^i = f^i(q,\dot{q}) + \rho^i_{\alpha}(q) u^{\alpha}\,.
\end{equation}
One can interpret this second order ODE (SODE) as a constraint on $T^{(2)}\mathcal{Q} \oplus_{\mathcal{Q}} \mathcal{E}$, where $T^{(k)} \mathcal{Q}$ denotes the $k$-th order tangent bundle of $\mathcal{Q}$ (see \cite{deLeonRodrigues85}). Moreover, it can be rewritten as an explicit system of first-order ODEs:
\begin{equation}\label{eq:system_SODE}
	\begin{bmatrix*}\dot{q}\\\dot{v}\end{bmatrix*} = \begin{bmatrix*}v\\f(q, v)+\rho(q) u\end{bmatrix*}.
\end{equation}
Here, the drift term $f(q,\dot{q})$ can be interpreted as a vector of generalised forces and for each $\alpha$ the components of the vertical lift of the anchor can be interpreted as control fields. Thus, we consider the following problem:
\begin{equation} \label{eq:eqample.OCP.}
	\begin{aligned}
		& \;\;\;\qquad\underset{u}{\text{min}} & & J\big(q,u\big) =  \phi\big(q(T), \dot{q}(T)\big) + \int_0^T \frac{1}{2} \,\mathrm{g}(q(t))(u(t),u(t))~dt& \\
		& \text{subject to} & & q(0) = q^0,&\\
		& & &\dot{q}(0) = \dot{q}^0,&\\
		& & & \ddot{q}=f(q,\dot{q})+\rho(q)u,&\\
	\end{aligned}
\end{equation}}
where $q\in C^3 ([0,T],\mathcal{Q}),\,%
	u \in C^1([0,T],\mathcal{N})$, \,%
	and\rodrigo{, with some abuse of notation,} $\phi \in C^1( T\mathcal{Q}, \mathbb{R})$. \rodrigo{The components $\mathrm{g}_{\,\alpha \beta}(q),\, f^i(q,v)$ and $\rho^i_{\alpha}(q)$ are also assumed to be $C^1$}, and thus, \sofya{the conditions in \eqref{eq:variational.classes1}-\eqref{eq:variational.classes2} are satisfied and the variational approach can be applied.}

\rodrigo{
\begin{remark}
    We would like to highlight that the approach presented in the following for the construction of a new Lagrangian does not rely on this particular structure of the problem, namely, quadratic Lagrange terms and control-affine systems. Rather, we choose these systems to showcase the approach for simplicity. It is sufficient to be able to solve for the controls uniquely in terms of the adjoints, but we intend to explore this in more detail in the future.
\end{remark}}

\subsection{Pontryagin's maximum principle for the control affine system}

We first apply Pontryagin's maximum principle to \eqref{eq:eqample.OCP.}. Since the final condition in \eqref{eq:eqample.OCP.} is free, we have that $\lambda_0=-1$ and the normal Hamiltonian from PMP is defined as follows
\begin{align}
    \mathcal{H}(q,v, \lambda_q, \lambda_v, u)&= \rodrigo{\left\langle (\lambda_q,\lambda_v), (v, f(q,v) + \rho(q)u) \right\rangle - \frac{1}{2} \mathrm{g}(q)(u,u)\nonumber}\\
    &= \lambda_q^\top v+ \lambda_v^\top \rodrigo{(f(q,v) + \rho(q)u) -\frac{1}{2} u^{\top} \mathrm{g}(q)\, u},\label{normalhamiltonian}
\end{align}
where $\lambda_q$ and $\lambda_v$ are the adjoints related to $q$ and $v$, respectively. \rodrigo{In the second line, we introduce matrix notation and we are abusing our notation by identifying the metric with its associated matrix.}
For \eqref{normalhamiltonian} we obtain the following theorem applying PMP.
\begin{theorem}
   Considering the optimal control problem \eqref{eq:eqample.OCP.} the following equations hold
   \begin{subequations}
   \label{eq:eqample.OCP.eqs}
\begin{align}
\dot{\lambda}_q&=\rodrigo{\frac{1}{2}\left(\frac{\partial}{\partial q}\mathrm{g}(q)(u,u)\right)^{\top}-\left( \frac{\partial}{\partial q}f(q,v) + \frac{\partial}{\partial q}\rho(q) u \right)^\top\lambda_v},\label{eq:eqample.OCP.lambda_q}\\
 \dot{\lambda}_v &=-\lambda_q\rodrigo{-\frac{\partial}{\partial v}f(q,v)^\top\lambda_v},\label{eq:eqample.OCP.lambda_v}\\
 u&=\rodrigo{\mathrm{g}^{-1}(q) \,\rho(q)^\top \lambda_v}.\label{eq:eqample.OCP.u}
\end{align}
\end{subequations}
Moreover, for the final time one has 
\begin{align}
\label{finalconstraint}
     \lambda_q(T) =-\frac{\partial}{\partial q}\phi(q(T), v(T))^{\top},\qquad \lambda_v(T) =-\frac{\partial}{\partial v}\phi(q(T), v(T)^{\top}. 
\end{align}
\end{theorem}

\begin{proof}
Using Pontryagin's maximum principle for \eqref{eq:eqample.OCP.}, we obtain the following equations
\begin{align*}
    \begin{bmatrix}
			\dot{q}\\ \dot{v}
		\end{bmatrix}= 
   \begin{bmatrix}
			\frac{\partial \mathcal{H}}{\partial \lambda_q}\\[4pt] \frac{\partial \mathcal{H}}{\partial \lambda_v}
		\end{bmatrix} = 
  \begin{bmatrix}
			v\\ \rodrigo{f(q,v)+\rho(q)u}
		\end{bmatrix},\quad 
    \begin{bmatrix}
    			\dot{\lambda}_q\\ \dot{\lambda}_v
		\end{bmatrix}=- 
  \begin{bmatrix}
			\frac{\partial \mathcal{H}}{\partial q}\\[4pt] \frac{\partial \mathcal{H}}{\partial v}
		\end{bmatrix} =\rodrigo{ 
  \begin{bmatrix}
			(\frac{\partial}{\partial q}\mathrm{g}(q)(u,u))^{\top} -(\frac{\partial}{\partial q}f(q,v) + \frac{\partial}{\partial q}\rho(q) u )^\top\lambda_v\\ -\lambda_q -\frac{\partial}{\partial v}f(q,v)^\top\lambda_v
   \end{bmatrix},}
\end{align*}
and 
\begin{equation*}
    0 = \frac{\partial}{\partial u}\mathcal{H}=\rodrigo{u^{\top} \mathrm{g}(q) - \lambda_v^{\top} \rho(q)}\,.
\end{equation*} 
\rodrigo{Since $g$ is a Riemannian metric, its matrix is invertible, which allows us to isolate $u$ and obtain Equation~\eqref{eq:eqample.OCP.u}.} From Pontryagin's maximum principle one can also derive condition\rodrigo{s} \eqref{finalconstraint} directly, which finishes the proof.
\end{proof}
 \begin{remark}
     Notice that \rodrigo{eliminating the controls $u$ and the multiplier $\lambda_q$ using \eqref{eq:eqample.OCP.u} and \eqref{eq:eqample.OCP.lambda_v} respectively, the system of equations reduces to the system of SODEs
     \begin{subequations}
\label{eq:eqample.constraint.2order}
\begin{align} 
         \ddot{q} &= f(q,\dot{q}) + \rho(q) \mathrm{g}^{-1}(q) \,\rho(q)^\top \lambda_v\,,\label{eq:eqample.constraint.2order.q}\\ \ddot{\lambda}_v &= \frac{1}{2}\left(\frac{\partial}{\partial q}\mathrm{g}^{-1}(q)(\rho(q)^\top \lambda_v, \rho(q)^\top \lambda_v)\right)^{\top}\nonumber\\
         &- \frac{d}{d t}\left(\frac{\partial}{\partial \dot{q}}f(q,\dot{q})^\top\lambda_v \right) + \left(\frac{\partial}{\partial q}f(q,\dot{q}) + \frac{\partial}{\partial q}\rho(q) \mathrm{g}^{-1}(q) \,\rho(q)^\top \lambda_v\right)^\top\lambda_v.
     \label{eq:eqample.constraint.2order.lambda_v}\end{align}
     \end{subequations}
     where we have used that for every differentiable and invertible matrix function $(A^{-1})' = -A^{-1}A' A^{-1}$.}
 \end{remark}

\subsection{Augmented objective approach} We now consider the variational approach for problem \eqref{eq:eqample.OCP.}.
Note that the classical approaches work with the system of first order ODEs, {which is \eqref{eq:system_SODE} in our case. The corresponding Lagrange multipliers are $\lambda_q,\: \lambda_v$.} 
With that, we can
define the augmented Lagrangian for \eqref{eq:eqample.OCP.} as follows
\begin{align}
\label{lagr}
\mathcal{L}(q,v, \dot{q}, \dot{v}, \lambda_q, \lambda_v, u) &= \rodrigo{\frac{1}{2} \mathrm{g}(q)(u,u) + \left\langle(\lambda_q,\lambda_v), (\dot{q}-v, \dot{v}-f(q,v) -\rho(q) u )\right\rangle}\nonumber\\
&=\rodrigo{\frac{1}{2} u^{\top} \mathrm{g}(q)\,u + \lambda_q^\top\big(\dot{q} - v \big)+ \lambda_v^\top ( \dot{v} -f(q,v) - \rho(q) u)},
\end{align}
where
\begin{align*}
   &(q,v,\lambda_q,\lambda_v) \in C^2([0,T], T^*(T\mathcal{Q})),\\
    &\mathcal{L}\in C^1 (\rodrigo{(T(T\mathcal{Q}) \oplus_{T\mathcal{Q}} T^*(T\mathcal{Q})) \oplus_{\mathcal{Q}} \mathcal{E}}, \mathbb{R}).
\end{align*}

With this Lagrangian, we can derive the following theorem considering the augmented objective approach.

\begin{theorem}
   The augmented objective 
   \begin{align*}
\mathcal{J}(q, v,\lambda_q, \lambda_v, u)&=
   \phi\big(q(T),v(T)\big)+ \int_0^T \mathcal{L}(q,v, \dot{q}, \dot{v}, \lambda_q, \lambda_v, u) \, dt \\
   &=\phi\big(q(T),v(T)\big)+ \int_0^T \rodrigo{\left[\frac{1}{2} u^{\top} g(q) \, u + \lambda_q^\top\big(\dot{q} - v \big)+ \lambda_v^\top ( \dot{v} - f(q,v) - \rho(q) u) \right]}\, dt
   \end{align*}
   is stationary for the respective variations, if the constraints in \eqref{finalconstraint} and \rodrigo{Equations~\eqref{eq:system_SODE} and \eqref{eq:eqample.OCP.eqs} hold}.
   \end{theorem}

\begin{proof}
After \rodrigo{taking variations of $\mathcal{J}$ and applying integration by parts on the terms involving temporal derivatives, we arrive at}
\rodrigo{
\begin{align*}
    \delta\mathcal{J} &=\left(\lambda_q(T) + \frac{\partial}{\partial q}\phi(q(T), v(T))^{\top}
  \right)^{\top}\delta q(T) + \left(  \lambda_v(T) +\frac{\partial}{\partial v}\phi(q(T), v(T))^{\top}\right)^{\top} \delta v(T)\\
    &+ \int_0^T \left[\, \vphantom{\left(\frac{\partial}{\partial q}\right)^{\top}}\delta \lambda_q^{\top} (\dot{q} - v) + \delta \lambda_v^{\top} \left( \dot{v} -( f(q,v) + \rho(q) u)\right) + (\mathrm{g}(q) u - \rho(q)^{\top}\lambda_v)^{\top} \delta u \right.\\
    &\quad\qquad+ \left(\frac{1}{2}\left(\frac{\partial}{\partial q}\mathrm{g}(q)(u,u)\right)^{\top}-\dot{\lambda}_q -\left(\frac{\partial}{\partial q}f(q,v) + \frac{\partial}{\partial q}\rho(q) u\right)^\top\lambda_v\right)^{\top} \delta q\\
    &\quad\qquad\left.+ \left(-\dot{\lambda}_v - \lambda_q - \frac{\partial}{\partial v}f(q,v)^\top \lambda_v\right)^{\top} \delta v\,\right] d t.
\end{align*}
}
    Since the variations are independent, \rodrigo{under the conditions of the theorem, the stationarity of the action follows.}
 \end{proof}
\begin{remark}
    Notice that the derivatives $\dot{\lambda}_q$ and $\dot{\lambda}_v$ do not appear in $\mathcal{L}$ in \eqref{lagr}, and thus, the derivatives of $\mathcal{L}$ with respect to $\dot{\lambda}_q,\: \dot{\lambda}_v$ vanish, and therefore the Jacobian of $\mathcal{L}$ \rodrigo{with respect 
 to the derivatives} is singular.
\end{remark}

\subsection{New Lagrangian and Euler-Lagrange equations}\label{ssec:new_lagrangian}


We \rodrigo{define a} new Lagrangian $\tilde{\mathcal{L}}^u$ \rodrigo{by} appending the \rodrigo{SODE \eqref{eq:SODE}} to the cost \rodrigo{instead of \eqref{eq:system_SODE}} and doing integration by part of the term $\lambda^\top \ddot{q}$ as follows:
\rodrigo{
\begin{align}
\label{eq:new.lagrange.}
   & \int_0^T \left[ \frac{1}{2}\,\mathrm{g}(q(t))(u(t),u(t))+\lambda(t)^\top \left(\vphantom{\sum}\ddot{q}(t)-f(q(t),\dot{q}(t))-\rho(q) u(t)\right)\: dt\right]\nonumber\\
    &\quad = \int_0^T \left[\frac{1}{2}\,\mathrm{g}(q(t))(u(t),u(t)) - \dot{\lambda}(t)^\top \dot{q}(t)-{\lambda}(t)^\top \left(\vphantom{\sum}f(q(t),\dot{q}(t)) + \rho(q(t)) u(t)\right)\right] \: dt+ [\lambda(t)^\top \dot{q}(t)]_0^T\notag\\
    &\quad=: \int_0^T \tilde{\mathcal{L}}^u(q, {\lambda}, \dot{q}, \dot{\lambda},u) \: dt+[\lambda(t)^\top \dot{q}(t)]_0^T,
\end{align}}
where
\begin{align*}
(q,{\lambda})\in C^3 ([0,T],T^*\mathcal{Q}),
\end{align*}
and thus 
\begin{align}
\label{eq:actual.new.lagrange.}
\tilde{\mathcal{L}}^u\in C^1(\rodrigo{T(T^*\mathcal{Q})\oplus_{\mathcal{Q}} \mathcal{E}}, \mathbb{R})
\end{align}
which we will use as a new Lagrangian (see Section~\ref{ssec:geometric_setting} for the geometric interpretation of the new Lagrangian). 
Consider variations of the new augmented objective
\begin{equation*}
    \tilde{\mathcal{J}}^u(q,\lambda,u) =  \phi\big(q(T), \dot{q}(T)\big) +\lambda(T)\dot{q}(T)-\lambda(0)\dot{q}(0)+\int_0^T \rodrigo{\left[  
    \frac{1}{2} \mathrm{g}(q)(u,u) -\dot{\lambda}^\top  \dot{q} - \lambda^\top \left( \vphantom{\sum}f(q,\dot{q}) + \rho(q) u\right) \right]}
    dt.  
\end{equation*}
  \sofya{We are in the setting of \eqref{eq:variational.classes1}-\eqref{eq:variational.classes2} with the newly defined $\tilde{\mathcal{J}}^u(q,\lambda,u)$ and the variational approach can be used.} 
The application of \rodrigo{integration by parts leads to 
\begin{align*}
0=\delta \tilde{\mathcal{J}}^u
&=\left( - \dot{\lambda}(T)^{\top} - \lambda(T)^{\top} \frac{\partial}{\partial \dot{q}}f(q(T),\dot{q}(T)) + \frac{\partial}{\partial q}\phi(q(T), \dot{q}(T))
  \right) \delta q(T)\\
  &+ \left(  \lambda(T)^{\top} +\frac{\partial}{\partial \dot{q}}\phi(q(T), \dot{q}(T))\right)^{\top} \delta \dot{q}(T)\\
    &+ \int_0^T \left[\, \left(\ddot{\lambda} + \frac{1}{2}\left(\frac{\partial}{\partial q}\mathrm{g}(q)(u,u)\right)^{\top} -  \left(\frac{\partial}{\partial q}f(q,\dot{q}) + \frac{\partial}{\partial q}\rho(q) u\right)^\top\lambda + \frac{d}{d t} \left( \left(\frac{\partial}{\partial \dot{q}}f(q,\dot{q}) \right)^\top\lambda \right) \right)^{\top} \delta q\right.\\
    &\quad\qquad\left.+ \vphantom{\left(\frac{\partial}{\partial q}\right)^{\top}}\delta \lambda^{\top} \left(\vphantom{\sum} \ddot{q} - f(q,\dot{q}) - \rho(q) u \right) + \left(\vphantom{sum}\mathrm{g}(q) u - \rho(q)^{\top}\lambda\right)^{\top} \delta u \,\right] d t.
\end{align*}}
\rodrigo{where we have used the fact that $\delta q(0)=\delta \dot{q}(0)=0$. Thus,} we deduce 
\rodrigo{
\begin{subequations}
\label{endcond}
\begin{align}
    \dot{\lambda}(T) + \frac{\partial}{\partial \dot{q}}f(q(T),\dot{q}(T))^{\top} \lambda(T) &=  \frac{\partial}{\partial q}\phi(q(T), \dot{q}(T))^{\top} \label{endcond.dlambda}\\
    \lambda(T) &= -\frac{\partial}{\partial \dot{q}}\phi(q(T), \dot{q}(T))^{\top}\label{endcond.lambda}
\end{align}
\end{subequations}
\begin{remark}
    Notice that \eqref{endcond.dlambda} is nothing but the boundary condition for $\lambda_q(T)$ in \eqref{finalconstraint}. It suffices to make the identification $\lambda \equiv \lambda_v$ and eliminating $\lambda_q$ using \eqref{eq:eqample.OCP.lambda_v} evaluated at $T$. Similarly, we see that the resulting equation obtained from the variation with respect to $q$ is \eqref{eq:eqample.OCP.lambda_q} after the same substitution.
\end{remark}}
 
The optimal control \rodrigo{\eqref{eq:eqample.OCP.u}} associated with the new Lagrangian \eqref{eq:actual.new.lagrange.} can be derived in the same way as for the augmented objective, i.e. via a variational principle. Substituting the \rodrigo{optimal control} in $\tilde{\mathcal{L}}^u$ \rodrigo{in terms} of $\lambda$, we define $\tilde{\mathcal{L}}: T (T\rodrigo{^*}\mathcal{\mathcal{Q}}) \to \mathbb{R}$, the new Lagrangian without dependence on the control
\rodrigo{\begin{align}
\tilde{\mathcal{L}}(q, \lambda, \dot{q}, \dot{\lambda}) &=  -\dot{\lambda}^\top \dot{q} - \lambda^\top f(q,\dot{q}) - \frac{1}{2} \mathrm{g}^{-1}(q)(\rho(q)^{\top} \lambda, \rho(q)^{\top} \lambda)\nonumber\\
&=: -\dot{\lambda}^\top \dot{q} - \lambda^\top f(q,\dot{q}) - \frac{1}{2} b(q)(\lambda, \lambda).\label{eq:new.lagrange.no.input}
\end{align}
where we have introduced the symmetric bilinear form $b: T^*Q \times T^*Q \to \mathbb{R}$ for notational simplicity. Notice that only in the case where $\dim \mathcal{Q} = \dim \mathcal{N}$ will this be a Riemannian metric, in which case, $b \sim \mathrm{g}_{\mathcal{Q}}^{-1}$.}

\rodrigo{If $\mathcal{Y} = T^* \mathcal{Q}$ and 
$y(t) = (q(t), \lambda(t)) \in \mathcal{Y}$ for $t \in [0,T]$, we see that the new Lagrangian
$\tilde{\mathcal{L}}: T\mathcal{Y} \to \mathbb{R}$ is indeed a Lagrangian in the mechanical sense (Section ~\ref{ssec:LHmechanics}) and we may compute its Euler-Lagrange equations
\begin{align*}
    \frac{\partial \tilde{\mathcal{L}}}{\partial y} (y,\dot{y})-\frac{d}{dt}\left( \frac{\partial \tilde{\mathcal{L}}}{\partial \dot{y}} (y,\dot{y}) \right)=0.
\end{align*}
This results in the following theorem.}
\begin{theorem} 
The Euler-Lagrange equations for the Lagrangian $ \tilde{\mathcal{L}}$ provide the same necessary conditions and boundary constraints as the ones in \eqref{eq:eqample.constraint.2order} of the original minimisation problem \eqref{eq:eqample.OCP.} \rodrigo{under the identification $\lambda = \lambda_v$, i.e.,
\begin{subequations}
\label{eq:EL_new.}
\begin{align} 
         \ddot{q} &= f(q,\dot{q}) + b(q) \lambda\,,\label{eq:eqample.constraint.2order.q}\\ \ddot{\lambda} &= \frac{1}{2}\left( 
 \frac{\partial}{\partial q} \left(b(q)(\lambda, \lambda)\right)\right)^{\top} + \left(\frac{\partial}{\partial q}f(q,\dot{q})\right)^\top\lambda - \frac{d}{d t}\left(\frac{\partial}{\partial \dot{q}}f(q,\dot{q})^\top\lambda \right).
     \label{eq:eqample.constraint.2order.lambda_v}\end{align}
     \end{subequations}}
and 
\rodrigo{
\begin{align*}
    \dot{\lambda}(T) + \frac{\partial}{\partial \dot{q}}f(q(T),\dot{q}(T))^{\top} \lambda(T) &=  \frac{\partial}{\partial q}\phi(q(T), \dot{q}(T))^{\top}\\
    \lambda(T) &= -\frac{\partial}{\partial \dot{q}}\phi(q(T), \dot{q}(T))^{\top}
\end{align*}}
\end{theorem}

\begin{proof}
\rodrigo{A straightforward computation leads us to \eqref{eq:EL_new.},
which corresponds to the state and adjoint equations derived in \eqref{eq:eqample.constraint.2order} under the definitions and identifications above mentioned. The boundary conditions were already obtained in \eqref{endcond}.}
\end{proof}
\rodrigo{Moreover, as can be readily checked, this new Lagrangian is hyperregular. Thus, the Legendre transformation is bijective and it lets us formulate a} Hamiltonian counterpart as depicted in Figure~\ref{fig:LHmech}. 

\sofya{
\begin{remark}
   Now that \eqref{eq:eqample.OCP.} is reformulated in a form of a regular Lagrangian variational problem, we can apply variational integrators to discretise it. This leads to a symplectic discretisation of the corresponding Euler-Lagrange equations \cite{marsden01}. The discrete problem can then be solved numerically. This can be regarded as a new numerical approach to treat optimal control problems and will be investigated in future publications.
\end{remark}
}

\subsection{New Hamiltonian system}\label{ssec:new_hamiltonian}

Let us now consider the Hamiltonian system of the new, regular Lagrangian $\tilde{\mathcal{L}}$. The Legendre transformation,
$\mathbb{F}L:(y,\dot{y}) \mapsto \left(y,{p_y} = \frac{\partial \tilde{\mathcal{L}}}{\partial \dot{y}}(y,\dot{y})\right)$,
for the new Lagrangian provides the conjugate momenta  

\begin{align*}
		p_y=
			 \frac{\partial \tilde{\mathcal{L}}}{\partial \dot{y}}, \qquad \quad\begin{bmatrix}
			p_q\\[0.5em]p_{\lambda}
		\end{bmatrix} =
		\begin{bmatrix}
			 -\dot{\lambda} \sofya{-\frac{\partial}{\partial \dot{q}}f(q, \dot q)^\top\lambda}\\[0.5em] -\dot{q}
		\end{bmatrix},	
\end{align*}
and therefore invertible. 
The new Hamiltonian can be computed as
\begin{align*}
\tilde{\mathcal{H}}: T^* \mathcal{Y} \rightarrow \mathbb{R}, \quad 
\tilde{\mathcal{H}}(y, p_y)&=\tilde{\mathcal{H}}\big(q, \lambda, p_q, p_{\lambda}\big)  = \rodrigo{\left\langle (p_q, p_{\lambda}), (\dot{q},\dot{\lambda})\right\rangle_{\mathcal{Y}}}
 -\tilde{\mathcal{L}}(q, \lambda, \dot{q}, \dot{\lambda}) \notag\\
 &=p_q^\top \dot{q}+p_{\lambda}^\top \dot{\lambda}+ \sofya{\dot{\lambda}^\top \dot{q} + \lambda^\top f(q,\dot{q}) + \frac{1}{2} b(q)(\lambda, \lambda)} \\
 &= -p_{\lambda}^\top p_q+ \sofya{\lambda^\top \tilde f(q,p_{\lambda}) + \frac{1}{2} b(q)(\lambda, \lambda),}
\end{align*}
\sofya{where $\tilde f(q,p_{\lambda}) = f(q,\dot{q}(p_{\lambda}))$. We also define $\tilde\phi(q, p_{\lambda}) = \phi(q, \dot{q}(p_{\lambda}))$.} 
\begin{theorem} \label{Hamiltonsystem}
   Hamilton's equations for $\tilde{\mathcal{H}}$ \sofya{in the variables $y = (q, \lambda)$ and $p_y = (p_q, p_{\lambda})$ are given by} 
   \begin{equation} \label{eq:Ham.sys.}
	\begin{aligned}
		&\dot{y} = 
  \begin{bmatrix}
			-p_{\lambda}\\[0.5em] -p_q \sofya{+ \frac{\partial}{\partial p_{\lambda}} \tilde f(q,p_{\lambda})^\top\lambda}
		\end{bmatrix},\ 
		&\dot{p}_y 
  = 
  \begin{bmatrix}
			- \frac{\partial}{\partial q}\sofya{ \tilde f(q,p_{\lambda})^\top \lambda- \frac{1}{2}\left( 
 \frac{\partial}{\partial q} \left(b(q)(\lambda, \lambda)\right)\right)^{\top}} \\[0.5em] \sofya{- \tilde f(q,p_{\lambda}) - b(q) \lambda}
		\end{bmatrix}
	\end{aligned}
\end{equation}
   \sofya{and are equivalent to \eqref{eq:EL_new.}.} 
   Moreover, one has the following boundary conditions
   \begin{align}
    \label{Hambc}
       \dot{q}(0)&=-p_{\lambda}(0), \quad  \dot{\lambda}(0)=-p_{q}(0) \sofya{ + \frac{\partial}{\partial p_{\lambda}} \tilde f(q(0),p_{\lambda}(0))^\top\lambda(0)},\notag\\
    \dot{q}(T)&=-p_{\lambda}(T), \quad  \dot{\lambda}(T)=-p_{q}(T) \sofya{ + \frac{\partial}{\partial p_{\lambda}} \tilde f(q(T),p_{\lambda}(T))^\top\lambda(T)},
   \end{align}
\sofya{ and
 \begin{align} \label{Hambc2}
    p_q(T) &= - \frac{\partial}{\partial q}\tilde \phi(q(T), p_{\lambda}(T))^{\top}, \\
    \lambda(T) &= \frac{\partial}{\partial p_{\lambda}}\tilde \phi(q(T), p_{\lambda}(T))^{\top} \label{Hambc2},
\end{align}}

\end{theorem}

\begin{proof}
 With $p_y = (p_q, p_{\lambda})$, the associated Hamilton's \sofya{equations are }

\begin{equation*} 
	\begin{aligned}
		&\dot{y} = 
 \frac{\partial}{\partial p_y}\tilde{\mathcal{H}}=
  \begin{bmatrix}
			-p_{\lambda}\\[0.5em] -p_q \sofya{+ \frac{\partial}{\partial p_{\lambda}} \tilde f(q,p_{\lambda})^\top\lambda}
		\end{bmatrix},\ 
		&\dot{p}_y 
  = - \frac{\partial}{\partial y} \tilde{\mathcal{H}}=
  \begin{bmatrix}
			- \frac{\partial}{\partial q}\sofya{ \tilde f(q,p_{\lambda})^\top \lambda- \frac{1}{2}\left( 
 \frac{\partial}{\partial q} \left(b(q)(\lambda, \lambda)\right)\right)^{\top}} \\[0.5em] \sofya{- \tilde f(q,p_{\lambda}) - b(q) \lambda}
		\end{bmatrix}.
	\end{aligned}
\end{equation*}
which are equivalent to the Euler-Lagrange equations in~\eqref{eq:EL_new.}, since 
\begin{align*}
    \ddot{q} =-\dot{p}_{\lambda} 
    \qquad \text{and} \qquad
     \ddot{\lambda} =-\dot{p}_{q} \sofya{ - \frac{d}{d t}\left(\frac{\partial}{ \partial \dot{q}}f(q,\dot{q})^\top\lambda \right)}. 
\end{align*}
Therefore, they are also equivalent to the conditions in \eqref{eq:eqample.constraint.2order} of the original OCP problem. Using \sofya{the relationship between $\dot y$ and $p_y$ together with \eqref{endcond.dlambda}-\eqref{endcond.lambda}, we obtain \eqref{Hambc}-\eqref{Hambc2}.}
\end{proof}

As it was shown in Theorem \ref{Hamiltonsystem} we have commutation in the following diagram.

\begin{center}
	\includegraphics[scale=.5]{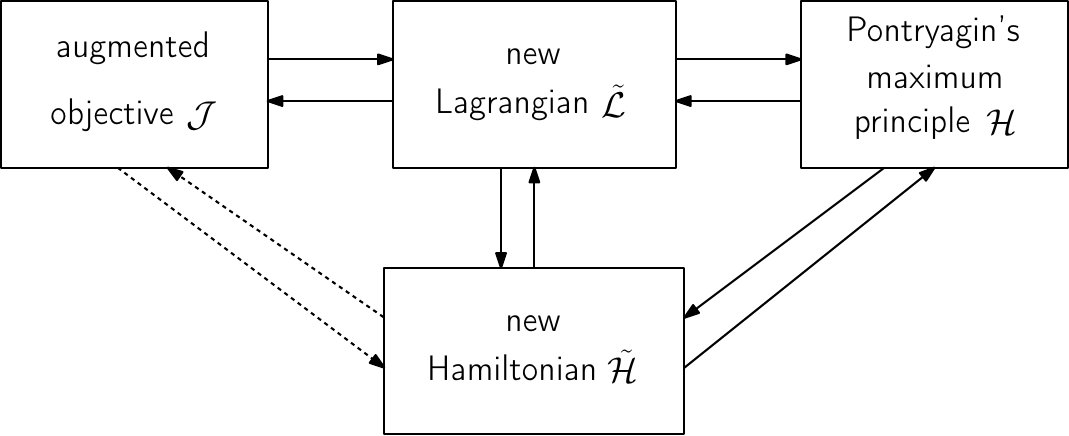}
\end{center}

In summary, for the optimal control problem \eqref{eq:eqample.OCP.} we derived a purely Lagrangian system on the new tangent bundle $\rodrigo{T}\mathcal{Y}$ which involves both, state and adjoint variables of the optimal control problem. Thus we can apply standard techniques as for mechanical Lagrangians. The Euler-Lagrange equations provide the correct state and adjoint dynamics. Furthermore, a purely Hamiltonian description is obtained by applying a Legendre transformation.

 \sofya{
\begin{remark}
  Notice that the relationship between Lagrangian and Hamiltonian formulations will be also preserved on the discrete side if we apply variational integrators. In this case the discrete Legendre transformation transforms from the Lagrangian to the Hamiltonian setting and vice versa. 
\end{remark}
}
\subsection{Geometric setting}\label{ssec:geometric_setting}

For the background in geometric mechanics required in the following section, see e.g. \cite{marsden94, treanta14}.\\

The construction outlined in Section \ref{ssec:new_lagrangian} is intimately related with the so-called Tulczyjew's triple \cite{TulczyjewHam,TulczyjewLag,GrabowskaGrabowski}. This is an isomorphic relation between the three double bundles $T^*(T \mathcal{Q})$, $T(T^*\mathcal{Q})$ and $T^*(T^*\mathcal{Q})$. For our particular case, the relation between the first two is the most important.\\

First, consider the augmented Lagrangian of \eqref{lagr}. This function is defined on \rodrigo{$(T(T\mathcal{Q}) \oplus_{T\mathcal{Q}} T^*(T\mathcal{Q})) \oplus_{\mathcal{Q}} \mathcal{E}$}, and the fact that our dynamics come from a SODE is represented by the constraint $\dot{q} = v$. When moving to \eqref{eq:new.lagrange.}, we are already assuming the restriction to the SODE case and, because of that, both $v$ and $\lambda_q$ disappear from the integrand, and the left-hand side of the first equality is now a function in the more restricted space \rodrigo{$T^{(2)} \mathcal{Q} \oplus_{\mathcal{Q}} T^*\mathcal{Q} \oplus_{\mathcal{Q}} \mathcal{E}$}, and which ends up mapped to \rodrigo{$T(T^*\mathcal{Q}) \oplus_{\mathcal{Q}} \mathcal{E}$} via integration by parts.\\

To better understand this, let us focus on the terms involving $\lambda_q$ and $\lambda_v$ and the right-hand side of \eqref{eq:system_SODE} in \eqref{lagr}. This can be interpreted as the pairing of a vector field $X$ on $T\mathcal{Q}$ (to be more precise a section \rodrigo{$\Gamma(T\mathcal{Q} \oplus_{\mathcal{Q}} \mathcal{E}, T(T \mathcal{Q}) \oplus_{\mathcal{Q}} \mathcal{E})$}) with a $1$-form $\lambda$ on the same space\rodrigo{, so locally}
\begin{align*}
X(q,v,u) &= X_q^i(q,v,u) \, \partial_{q^i} + X_v^i(q,v,u) \,\partial_{v^i}\,, \quad i = 1,..., d\\
\lambda &= \lambda_{q\,i} \,\mathrm{d}{q^i} + \lambda_{v\,i} \,\mathrm{d}{v^i}\,.
\end{align*}
The pairing can thus be regarded as a function on \rodrigo{$T^*(T\mathcal{Q}) \oplus_{\mathcal{Q}} \mathcal{E}$}, where we are choosing local coordinates $(q,v,\lambda_q,\lambda_v,u)$.\\

Tulczyjew's isomorphism $\alpha_{\mathcal{Q}}^{-1}: T^*(T \mathcal{Q}) \to T(T^* \mathcal{Q})$, $(q,v,\lambda_q,\lambda_v) \mapsto (q,\lambda_v,v,\lambda_q)$, allows us \rodrigo{to} reinterpret this pairing as a function on \rodrigo{$T(T^*\mathcal{Q}) \oplus_{\mathcal{Q}} \mathcal{E}$}. If we assume local coordinates $(q,\lambda,v_q,v_{\lambda},u)$ on this latter space, we obtain the identification
\begin{align*}
\lambda_v &= \lambda\\
v &= v_q\\
\lambda_q &= v_{\lambda}
\end{align*}
where now $\lambda$ denotes a form (and the collection of its coordinates by abuse of notation) on $\mathcal{Q}$.\\

Thus, we see that our new Lagrangian
\rodrigo{\begin{equation*}
\tilde{\mathcal{L}}^u(q, \lambda, v_q, v_{\lambda}, u) = \frac{1}{2} \mathrm{g}(q)(u,u) - v_{\lambda}^\top v_q - \lambda^\top [ f(q,v_q) + \rho(q) u ]\,,
\end{equation*}}
is a well-defined function on \rodrigo{$T(T^*\mathcal{Q}) \oplus_{\mathcal{Q}} \mathcal{E}$}, and, in particular, the expression in the last equality of ~\eqref{eq:new.lagrange.} is nothing but the evaluation of this function on the canonical lift of a curve \cite{CrampinPirani86} on \rodrigo{$T^*\mathcal{Q} \oplus_{\mathcal{Q}} \mathcal{E}$ to $T(T^*\mathcal{Q}) \oplus_{\mathcal{Q}} \mathcal{E}$}. More explicitly, if we consider a curve $(q,\lambda) \in C^k([0,T], T^*\mathcal{Q})$ with $k > 0$, its lift to $T(T^*\mathcal{Q})$ gives us the curve $(q, \lambda, \dot{q}, \dot{\lambda}) \in C^{k-1}([0,T], T(T^*\mathcal{Q}))$.

\subsection{Noether's theorem, conserved quantities}

Having the setting as in classical mechanics, in this section we investigate what are
the conserved quantities of a Lagrangian flow corresponding to symmetries (also called invariance)
of the new Lagrangian\rodrigo{s $\tilde{\mathcal{L}}^u$ and} $\tilde{\mathcal{L}}$\rodrigo{, } what is their meaning and their relation to the dynamics and the
OCP \eqref{eq:eqample.OCP.}.

\rodrigo{Let us consider a Lie group $G$ acting on $\mathcal{E}$ with smooth action $\Phi^\mathcal{E}: G \times \mathcal{E} \to \mathcal{E}$. We say that $(\mathcal{E},\pi,\mathcal{Q},\rho)$ is a $G$-equivariant anchored vector bundle if
\begin{enumerate}
\item $\mathcal{E}$ is a $G$-equivariant bundle, i.e. there exist $\Phi^\mathcal{Q} : G \times \mathcal{Q} \to \mathcal{Q}$ such that $\pi \circ \Phi^\mathcal{E} = \Phi^\mathcal{Q}$.
\item \label{itm:anchor_compat} $\rho \circ \Phi^\mathcal{E} = \Phi^{T\mathcal{Q}}$, where $\Phi^{T\mathcal{Q}} : G \times T\mathcal{Q} \to T\mathcal{Q}$ is the tangent lift of $\Phi^\mathcal{Q}$.
\end{enumerate}
Since $\rho$ is assumed to be linear, this implies that for each $g \in G$, $(\Phi_g^{\mathcal{E}},\Phi_g^\mathcal{Q})$ is a vector bundle morphism, i.e. linear in the fibres.
Thus, locally
\begin{align*}
\Phi^{\mathcal{E}}_g(q,u) &= (\Phi^i(g,q),\Psi^{\alpha}_{\beta}(g,q) u^{\beta})\,,\\
\Phi^{T\mathcal{Q}}_g(q,v) &= \left(\Phi^i(g,q),\frac{\partial \Phi^i}{\partial q^j}(g,q) v^{j}\right)\,,
\end{align*}
for some functions $(\Psi_g)^{\alpha}_{\beta}: \mathcal{U} \subset \mathcal{Q} \to \pi^{-1}(\mathcal{U})$. Then condition \ref{itm:anchor_compat} implies that
\begin{equation*}
\rho^i_{\alpha}(\Phi(g,q)) \Psi^{\alpha}_{\beta}(g,q) u^{\beta} = \frac{\partial \Phi^i}{\partial q^j}(g,q) \rho^j_{\alpha}(q) u^{\alpha}\,.
\end{equation*}}
\sofya{From now on we use a simplified notation $\Phi_g(q) = \Phi(g,q)$ and  $\Psi_g(q) = \Psi(g,q)$. Moreover, we denote $T_q \Phi_g = \frac{\partial \Phi}{\partial q}(g,q)$ and the adjoint map $T^*_q \Phi_g$. The same notations are also used for $\Psi_g(q)$.} 
\rodrigo{Let us assume that $\mathcal{E}$ is indeed a $G$-equivariant anchored vector bundle. Consider also the lifts of the action $\Phi^{\mathcal{Q}}$ to $T^* \mathcal{Q}$ and $T(T^*\mathcal{Q})$. }
\sofya{In a local trivialization $(q,\lambda)$ of $T^*\mathcal{Q}$, the action is defined by $T^*\Phi_{g^{-1}}(q, \lambda) = (\Phi_g(q), T^*_{\Phi_g(q)}\Phi_{g^{-1}}(\lambda))$. Then, the action lifted to $T(T^*\mathcal{Q})$ is given by
\begin{equation*}
    (q, \lambda, v_q, v_{\lambda}) \mapsto (\Phi_g(q), T^*_{\Phi_g(q)}\Phi_{g^{-1}}(\lambda), T_q\Phi_g(v_q), T_{\lambda}T^*_{\Phi_g(q)}\Phi_{g^{-1}}(v_{\lambda})).
\end{equation*}}
We shall consider symmetric optimal control problems, in the sense that we assume to have an equivariant right-hand side of state equations 
\begin{align}
\label{equivariance_f}
\sofya{f(\Phi_g(q),T_q\Phi_g\dot{q})+\rho(\Phi_g(q))\rodrigo{\Psi_g(q)\,}u = T_q\Phi_gf(q,\dot{q})+T_q\Phi_g\rho(q)u,}
\end{align}
and invariance of the running cost 
\begin{align} \label{invariance_g}
\sofya{\mathrm{g}(\Phi_g(q))(\rodrigo{\Psi_g(q)\,} u,\rodrigo{\Psi_g(q)\,} u) =\mathrm{g}(q)(u,u).}
\end{align}

Moreover, we define the one parameter group of matrix transformations 
\begin{align*}
   \{{\tilde{\Phi}}_{g_s}:\mathcal{Y}\rightarrow \mathcal{Y}, \quad s \in \mathbb{R}\} \qquad \text{with}\qquad {\tilde{\Phi}}_{g_s}(y) = \sofya{\big(\Phi_{g_s}(q),  T^*_{\Phi_g(q)}\Phi_{g^{-1}}(\lambda) \lambda\big),}
\end{align*}
and the lift of {the} action 
\[
\begin{aligned}
	T{\tilde{\Phi}}_{g_s}(y, \dot{y}) 
	&= \sofya{ (\Phi_g(q), T^*_{\Phi_g(q)}\Phi_{g^{-1}}(\lambda), T_q\Phi_g(\dot q), T_{\lambda}T^*_{\Phi_g(q)}\Phi_{g^{-1}}(\dot{\lambda})). }
\end{aligned}
\]

Applying Noether's theorem, see e.g. \cite{marsden94}, to the new Lagrangian $\tilde{\mathcal{L}}$, one obtains immediately the following statement.
\begin{proposition}
\label{noetherforL}
 Let $G := \{ {\tilde{\Phi}}_{g_s} : \mathcal{Y} \rightarrow\mathcal{Y} ,\: s \in \mathbb{R}\}$ be a one-parameter group of transformations\rodrigo{, i.e.}:
 \begin{itemize}
     \item $ {\tilde{\Phi}}_{g_s} : \mathcal{Y} \mapsto\mathcal{Y}$ is a diffeomorphism for all $s \in \mathbb{R}$,
     \item ${\tilde{\Phi}}_{g_s}(y)$ as a function of $s$ is differentiable for all $y\in \mathcal{Y}$,
     \item ${\tilde{\Phi}}_{g_t}\circ {\tilde{\Phi}}_{g_s}={\tilde{\Phi}}_{g_{t+s}}$ for all $t,\:s \in \mathbb{R} $.
 \end{itemize}
    Moreover, assume that the Lagrangian system $(\mathcal{Y},\tilde{\mathcal{L}})$ is invariant under the action of $G$, i.e.,
    \begin{align*}
       \tilde{\mathcal{L}} \big(  {\tilde{\Phi}}_{g_s}(y), \, T_y{\tilde{\Phi}}_{g_s}(\dot{y}) \big)= \tilde{\mathcal{L}}(y,\dot{y})\quad \text{for all}\quad s\in \mathbb{R}, \: (y,\dot{y})\in T\mathcal{Y}.
    \end{align*}
  Then the momentum map
\begin{align*}
  I(y,\dot{y})= \frac{\partial \tilde{\mathcal{L}}}{\partial \dot{y}}(y,\dot{y}) \cdot 
  \rodrigo{\left.\frac{d}{ds}\right\vert_{s = 0}}\sofya{{\tilde{\Phi}}_{g_s}}(y)
\end{align*}
is a conserved quantity of the motion. 
\end{proposition}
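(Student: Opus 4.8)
The plan is to run the classical Noether argument, which is now available because the regularity of $\tilde{\mathcal{L}}$ on $T\mathcal{Y}$ furnishes genuine second-order Euler--Lagrange equations \eqref{eq:EL_new.}. First I would fix a solution curve $y(\cdot)$ of those Euler--Lagrange equations and deform it by the symmetry, setting $y_s(t) := {\tilde{\Phi}}_{g_s}(y(t))$. Since the tangent lift of a flow transports velocities, the velocity of the deformed curve is exactly $\dot{y}_s(t) = T_{y(t)}{\tilde{\Phi}}_{g_s}(\dot{y}(t))$, so the invariance hypothesis can be read as $\tilde{\mathcal{L}}(y_s(t), \dot{y}_s(t)) = \tilde{\mathcal{L}}(y(t),\dot{y}(t))$ for every $s$ and $t$; in particular the left-hand side is independent of $s$.

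Next I would differentiate this identity with respect to $s$ and evaluate at $s = 0$. Writing $\xi(t) := \frac{d}{ds}{\tilde{\Phi}}_{g_s}(y(t))\big|_{s=0}$ for the infinitesimal generator of the action along the curve --- this is the object $\frac{d}{ds}g_s(y)\big|_{s=0}$ appearing in the statement of the momentum map --- the chain rule gives
\[
0 = \frac{\partial \tilde{\mathcal{L}}}{\partial y}(y,\dot{y}) \cdot \xi + \frac{\partial \tilde{\mathcal{L}}}{\partial \dot{y}}(y,\dot{y}) \cdot \frac{\partial \dot{y}_s}{\partial s}\Big|_{s=0}.
\]
The key manipulation is to commute the two differentiations: because $\dot{y}_s = \partial_t y_s$ and $y_s$ is jointly smooth in $(s,t)$ (guaranteed by $y \in C^3$ and the differentiability of $s \mapsto {\tilde{\Phi}}_{g_s}$ assumed in the proposition), one obtains $\frac{\partial \dot{y}_s}{\partial s}\big|_{s=0} = \partial_t \frac{\partial y_s}{\partial s}\big|_{s=0} = \dot{\xi}(t)$.

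Then I would substitute the Euler--Lagrange equations $\frac{\partial \tilde{\mathcal{L}}}{\partial y} = \frac{d}{dt}\frac{\partial \tilde{\mathcal{L}}}{\partial \dot{y}}$, valid along $y(\cdot)$, to rewrite the identity as
\[
0 = \frac{d}{dt}\Big(\frac{\partial \tilde{\mathcal{L}}}{\partial \dot{y}}\Big)\cdot \xi + \frac{\partial \tilde{\mathcal{L}}}{\partial \dot{y}}\cdot \dot{\xi} = \frac{d}{dt}\Big(\frac{\partial \tilde{\mathcal{L}}}{\partial \dot{y}}\cdot \xi\Big) = \frac{d}{dt}\, I(y,\dot{y}),
\]
which is precisely the assertion that $I$ is constant along the motion. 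The main --- and essentially the only --- delicate point is the exchange of $\partial_s$ and $\partial_t$ above, which is exactly why the regularity hypotheses on $y$ and on the one-parameter group are imposed; once that is granted, the remainder is the textbook Noether computation. I would also note that, because the action is linear (${\tilde{\Phi}}_{g_s}(y) = (g_s q, (g_s^{-1})^\top \lambda)$), the generator $\xi$ is linear in $y$ and everything is globally defined on the vector space $\mathcal{Y}$, so no localization issues arise and the single chart used above suffices.
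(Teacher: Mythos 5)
Your proof is correct and is essentially the paper's own approach: the paper provides no separate proof of Proposition~\ref{noetherforL}, stating only that it follows ``immediately'' by applying the classical Noether theorem (with a citation to the standard literature), and your argument is precisely that textbook derivation --- deform a solution curve by the flow, use that the tangent lift transports velocities, differentiate the invariance identity in $s$ at $s=0$, commute $\partial_s$ and $\partial_t$, and substitute the Euler--Lagrange equations to obtain $\frac{d}{dt}\bigl(\frac{\partial \tilde{\mathcal{L}}}{\partial \dot{y}}\cdot \xi\bigr)=0$. The delicate points you flag (regularity needed to exchange the derivatives, and the linearity of the action on the vector space $\mathcal{Y}$) are handled consistently with the paper's standing smoothness assumptions, so there is no gap.
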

\sofya{Let us check that if an optimal control problem of the form \eqref{eq:eqample.OCP.} is symmetric with respect to the $G$-action in the sense of \eqref{equivariance_f}-\eqref{invariance_g}, then the associated Lagrangian $ \tilde{\mathcal{L}}$ is invariant with respect to $G$ and Proposition~\ref{noetherforL} applies. First notice that \eqref{equivariance_f}-\eqref{invariance_g} imply $b(\Phi_{g_s}q)( T^*_{\Phi_{g_s}(q)}\Phi_{{g_s}^{-1}}(\lambda),  T^*_{\Phi_{g_s}(q)}\Phi_{{g_s}^{-1}}(\lambda)) = b(q)(\lambda, \lambda)$. This follows from
$$
\mathrm{g}^{-1}(\Phi_{g_s}q)(\rho(q)^* \circ T^*_q\Phi_{g_s} \circ T^*_{\Phi_{g_s}(q)}\Phi_{{g_s}^{-1}}(\lambda), \rho(q)^* \circ T^*_q\Phi_{g_s} \circ T^*_{\Phi_{g_s}(q)}\Phi_{{g_s}^{-1}}(\lambda)) = \mathrm{g}^{-1}(q)(\rho(q)^* \lambda, \rho(q)^* \lambda). 
$$
Using the equivariance of $f$ and $\rho$ in \eqref{equivariance_f} and invariance of $\mathrm{g}$ in \eqref{invariance_g} we obtain 
\begin{align*}
    \tilde{\mathcal{L}}\big(T{\tilde{\Phi}}_{g_s}(y, \dot{y})\big)  &= - \langle T^*_{\Phi_{g_s}(q)}\Phi_{{g_s}^{-1}}(\dot \lambda), T_q\Phi_{g_s}(\dot{q}) \rangle  - \langle T^*_{\Phi_{g_s}(q)}\Phi_{{g_s}^{-1}}(\lambda),  T_q\Phi_{g_s}\circ f(q, \dot q)\rangle  - \frac{1}{2}  b(q)(\lambda, \lambda) \\ 
    &= -\langle \dot \lambda, \dot{q} \rangle  - \langle \lambda,   f(q, \dot q)\rangle  - \frac{1}{2}  b(q)(\lambda, \lambda)\\
      &=  \tilde{\mathcal{L}}(y, \dot{y}).
\end{align*}
Thus, we obtained that $\tilde{\mathcal{L}} $ is invariant under $G$.} \\

Let us now consider \sofya{an example of an action of rotation group on a configuration space given by an Euclidean space.}
\begin{example}
\label{exm:SO3}
 \sofya{Let $\mathcal{Q}=\mathbb{R}^3$ and $\mathcal{E}$ be a trivial bundle $\mathcal{E} = \mathcal{Q} \times \mathcal{N}$ with $\mathcal{N} = \mathbb{R}^3$. 
 Let $SO(3)$ act by matrix multiplication on $\mathcal{Q}$ and on $\mathcal{N}$. The optimal control problem is given as follows
 \begin{equation} \label{eq:example.SO(3)}
	\begin{aligned}
		& \underset{u}{\text{min}} & & J\big(q,u\big) =  \int_0^T \frac{1}{2} u^2(t)~dt& \\
		& \text{subject to} & & q(0) = q^0,&\\
		& & &\dot{q}(0) = \dot{q}^0,&\\
		& & & \ddot{q}=f(q)+u.
	\end{aligned}
\end{equation}
We assume that $f(q)$ is equivariant with respect to  $SO(3)$ action, which implies in this case that the OCP is symmetric in the sense of \eqref{equivariance_f}-\eqref{invariance_g}. Consider now a }
%
one-parameter group \sofya{$G$} of rotations around the axis $n\in\mathbb{R}^3$, $\| n\|_2=1$. Its action on $T\mathcal{Y}$ can be written as 
\begin{equation*}
    {\tilde{\Phi}}_{g_s}(y)={\tilde{\Phi}}_{g_s}(q,\lambda)=(\exp(s\hat{n})q,\exp(s\hat{n})\lambda ),
\end{equation*}
where  
\begin{equation*}
    \hat{n}=
\begin{bmatrix}
    0 & -n_3& n_2\\
    n_3 &0& -n_1\\
    -n_2& n_1& 0
\end{bmatrix}.
\end{equation*}
\sofya{Equivariance of $f$ with respect to $G$ can be expressed as follows}
\begin{align}
    \label{equivariancef}
    f(\exp(s\hat{n})q)=\exp(s\hat{n})f(q), \quad s\in \mathbb{R},\: q\in \mathbb{R}^3.
\end{align}
In this case, $\tilde{\mathcal{L}} $ is invariant under the action of $G$, since
\begin{align*}
    \tilde{\mathcal{L}}\big(T{\tilde{\Phi}}_{g_s}(y, \dot{y}))\big) 
    &=-\left(\exp(s\hat{n})\dot{\lambda}\right)^\top\exp(s\hat{n}) \dot{q} - \left(\exp(s\hat{n})\lambda\right)^\top f(\exp(s\hat{n})q) - \frac{1}{2} (\exp(s\hat{n})\lambda)^2 \\ 
    &=  \sofya{-\dot{\lambda}^\top \dot{q} - \lambda^\top f(q) - \frac{1}{2} \lambda^\top \lambda} \\
    &=\tilde{\mathcal{L}}(y,\dot{y}) ,
\end{align*}  
where we used \eqref{equivariancef} and the fact that
\begin{equation*}
    \left(\exp(s\hat{n})a\right)^\top \exp(s\hat{n})b=a^\top b \quad \text{for all}\quad  a,\:b\in \mathbb{R}^3,
\end{equation*}
which can be shown e.g. by using Rodrigues' rotation formula
\begin{equation*}
    g_s=\exp(s\hat{n})=I_3+ \sin(s) \hat{n}+(1-\cos(s))\hat{n}^2.
\end{equation*}

Thus, applying Proposition \ref{noetherforL} we obtain the conserved quantity
\begin{align*}
     I(y,\dot{y})= \frac{\partial \tilde{\mathcal{L}}}{\partial \dot{y}}(y,\dot{y}) \cdot  
 \rodrigo{\left.\frac{d}{ds}\right\vert_{s = 0}}\sofya{{\tilde{\Phi}}_{g_s}}(y)= 
  \begin{bmatrix}
      -\dot{\lambda}\\
       -\dot{q}
\end{bmatrix}^\top
    \begin{bmatrix}
      \hat{n}q\\
       \hat{n}\lambda
  \end{bmatrix}= -\dot{\lambda}^\top(n\times q)-\dot{q}^\top (n\times \lambda)= n^\top (\dot{q}\times \lambda + \dot{\lambda}\times q).
\end{align*}
Note that in our case $\dot{q}=-p_{\lambda}$ and $\dot{\lambda}=-p_q$, see \eqref{eq:Ham.sys.}, and therefore 
\begin{align*}
n^\top (\dot{q}\times \lambda + \dot{\lambda}\times q)= n^\top (q\times p_q+\lambda\times p_{\lambda})
\end{align*}
is a conserved quantity.
\end{example}

\subsection{A mechanical example - Low thrust orbital transfer}
\flora{As \rodrigo{an under-actuated} mechanical example we consider \rodrigo{the following orbital dynamics problem} \cite{ober-blobaum2008a}. A satellite with mass $m$ moves in the gravitational field of the Earth (mass $M$). The
satellite is to be transferred from one circular orbit to one in the same plane with a larger radius, while the
number of revolutions around the Earth during the transfer process is fixed. In polar coordinates $q = ( r, \varphi )$,
the Lagrangian of the system\rodrigo{, $L: T(\mathbb{R}^2\setminus \left\lbrace 0 \right\rbrace) \to \mathbb{R}$,} has the form
\begin{align*}
    L(q,\dot{q})=L(r,\varphi,\dot{r},\dot{\varphi})=\frac{1}{2}m (\dot{r}^2+r^2 \dot{\varphi}^2)+\gamma \frac{Mm}{r},
\end{align*}
with $\gamma$ being the gravitational constant. Assume that the propulsion system continuously exhibits a force $u$
only in the direction of circular motion (i.e. orthogonal to the vector $r$), such that the corresponding Lagrangian control force is \rodrigo{locally} given by \rodrigo{$f_L = m r u \, \mathrm{d}\varphi$. Thus, $\mathcal{N} = \mathbb{R}$ and $\mathcal{E} = \mathbb{R}^2\setminus \left\lbrace 0 \right\rbrace \times \mathbb{R}$. The anchor is locally represented by $\frac{1}{r} \partial_{\varphi} \otimes \mathrm{d} u$ and $\mathrm{g}^{\mathcal{Q}} = \mathrm{d}r \otimes \mathrm{d}r + r^2 \mathrm{d} \varphi \otimes \mathrm{d} \varphi$, so $\mathrm{g} = \mathrm{d} u \otimes \mathrm{d} u$.}\\
We consider the following objective functional and boundary conditions:
\begin{align*}
J(u)&=\int_0^T \rodrigo{\frac{1}{2}} u(t)^2 \: dt,\\
    q(0)&=(r(0),\varphi(0))=(r^0,0),\\
    \dot{q}(0)&=(\dot{r}(0),\dot{\varphi}(0))= (0,\sqrt{\gamma M/(r^0)^3}),\\
     q(T)&=(r(T),\varphi(T))=(r^T,0),\\
     \dot{q}(T)&=(\dot{r}(T),\dot{\varphi}(T))= (0,\sqrt{\gamma M/(r^T)^3}),
\end{align*}
where $T=d\sqrt{\frac{4\pi^2}{8\gamma M}(r^0+r^T)^3}$ with $d$ being a prescribed number
of revolutions around the Earth.\\
We apply the forced Euler-Lagrange equations
\begin{equation*}
  \frac{d}{dt} \frac{\partial}{\partial \dot{q}}L(q(t),\dot{q}(t)) = \frac{\partial}{\partial {q}}L(q(t),\dot{q}(t)) + f_L(q,u)
\end{equation*}
for $L$ obtaining
\begin{align*}
   m\ddot{r}&=mr\dot{\varphi}^2-\frac{\gamma M m}{r^2},\\
  mr^2\ddot{\varphi}&=\rodrigo{-2 m r \dot{r} \dot{\varphi}} + mru.
\end{align*}
This leads to the second order \sofya{control system}
\begin{align*}
    \ddot{q}(t)=( \ddot{r}(t), \ddot{\varphi}(t))=\left(r\dot{\varphi}^2-\frac{\gamma M }{r^2},\rodrigo{-\frac{2 \dot{r} \dot{\varphi}}{r}} + \frac{u}{r}\right),
\end{align*}
which \rodrigo{is of} the form \eqref{eq:SODE}.\\
Let us now consider the following augmented objective with $\lambda=(\rodrigo{\lambda_r,\lambda_\varphi})$:
\begin{align*}
   \mathcal{J}(q,\lambda,u)&= \phi\big(q(T), \dot{q}(T)\big)+ \int_0^T \mathcal{L} (q,\dot{q},\ddot{q},\lambda,u)\: dt\\
    &= \phi\big(q(T), \dot{q}(T)\big)+ \int_0^T \left[ \rodrigo{\frac{1}{2}} u(t)^2+\rodrigo{\lambda_r}\left(\ddot{r}-r\dot{\varphi}^2\rodrigo{+}\frac{\gamma M }{r^2}\right)+\rodrigo{\lambda_\varphi} \left(\ddot{\varphi} \rodrigo{\,+ \frac{2 \dot{r} \dot{\varphi}}{r}} -\frac{u}{r} \right)\right] \: dt    \\
     &= \phi\big(q(T), \dot{q}(T)\big)+ ( \rodrigo{\lambda_r} \dot{r})|_0^T  + ( \rodrigo{\lambda_\varphi} \dot{\varphi})|_0^T\\
     &+\int_0^T \left[ \rodrigo{\frac{1}{2}} u(t)^2- \rodrigo{\dot{\lambda}_r} \dot{r} - \rodrigo{\dot{\lambda}_\varphi} \dot{\varphi} - \rodrigo{\lambda_r} \left( r \dot{\varphi}^2 - \frac{\gamma M }{r^2}\right) - \rodrigo{\lambda_\varphi}\left(\rodrigo{-\frac{2 \dot{r} \dot{\varphi}}{r}} + \frac{u}{r}\right) \right]\: dt.
\end{align*}
\rodrigo{Taking variations} we obtain:
\rodrigo{
 \begin{align*}
     \delta u:& \quad u = \frac{\lambda_\varphi}{r}\\
     \delta r:& \quad \ddot{\lambda}_r = \lambda_r \left( \dot{\varphi}^{2}+\frac{2 \gamma  M}{r^3} \right) + \lambda_{\varphi} \left(\frac{2 \ddot{\varphi}}{r}-\frac{u}{r^{2}}\right) + \dot{\lambda}_{\varphi} \frac{2 \dot{\varphi}}{r}\\
     \delta \varphi:& \quad \ddot{\lambda}_{\varphi} = 2\left[ -\lambda_r \left(\ddot{\varphi} r +\dot{\varphi} \dot{r}\right) +  \lambda_{\varphi} \left(\frac{\ddot{r}}{r}-\frac{\dot{r}^{2}}{r^{2}}\right) -r \dot{\varphi} \dot{\lambda}_r + \dot{\lambda}_{\varphi} \frac{\dot{r}}{r} \right],
 \end{align*}}
and \rodrigo{eliminating $u$} we can define the Lagrangian 
\rodrigo{\begin{equation*}
\tilde{\mathcal{L}}(q, \lambda, \dot{q}, \dot{\lambda}) = - \rodrigo{\dot{\lambda}_r} \dot{r} - \rodrigo{\dot{\lambda}_\varphi} \dot{\varphi} - \rodrigo{\lambda_r} \left(r \dot{\varphi}^2 - \frac{\gamma M }{r^2}\right) + \rodrigo{\lambda_\varphi} \frac{2 \dot{r} \dot{\varphi}}{r} - \frac{1}{2} \frac{\lambda_{\varphi}^2}{r^2}.
\end{equation*}}
\rodrigo{It can be seen that the initial control system is rotationally invariant in the sense of \eqref{equivariance_f}-\eqref{invariance_g}. Moreover, the new Lagrangian is clearly rotationally invariant since $\varphi$ is cyclic. The one-parameter group of transformations associated with this symmetry in polar coordinates is simply angular translations, i.e $\tilde{\Phi}_{g_s}(r,\varphi,\lambda_r,\lambda_{\varphi}) = (r,\varphi + s,\lambda_r,\lambda_{\varphi})$ with infinitesimal generator $\partial_{\varphi}$. Thus, a}pplying Noether's theorem we obtain the conserved quantity
\rodrigo{
\begin{equation*}
     I(q, \lambda, \dot{q}, \dot{\lambda})= \begin{bmatrix}
      -\dot{\lambda}_r + \frac{2 \lambda_{\varphi} \dot{\varphi}}{r}\\
      -\dot{\lambda}_{\varphi} - 2 r \lambda_r \dot{\varphi} + 2 \frac{\lambda_{\varphi} \dot{r}}{r}\\
     -\dot{r}\\
    -\dot{\varphi}
  \end{bmatrix}^\top
  \begin{bmatrix}
      0\\
      1\\
      0\\
      0
  \end{bmatrix} = -\dot{\lambda}_{\varphi} - 2 r \lambda_r \dot{\varphi} + 2 \frac{\lambda_{\varphi} \dot{r}}{r}.
\end{equation*}
}}

\rodrigo{In Cartesian coordinates, the new Lagrangian takes the form
\begin{equation*}
\tilde{\mathcal{L}}(q, \lambda, \dot{q}, \dot{\lambda}) = - \dot{\lambda}_x \dot{x} - \dot{\lambda}_y \dot{y} + \gamma M \frac{x \lambda_x + y \lambda_y}{(x^2+y^2)^{3/2}} - \frac{(x \lambda_y - y \lambda_x)^2}{x^2+y^2}.
\end{equation*}
and the conserved quantity becomes
\begin{equation*}
    I(q, \lambda, \dot{q}, \dot{\lambda}) = - \dot{x} \lambda_y + \dot{y} \lambda_x + x \dot{\lambda}_y - y \dot{\lambda}_x
\end{equation*}
which can be seen to be of the form of that in Example \ref{exm:SO3} with $n = (0,0,1)$.
}

\paragraph{Outlook on future work}
The method considered in the paper is based on the introduction of the new Lagrangian formulation.
\rodrigo{This new formulation can be applied to construct numerical integration methods for smooth optimal control problems applying variational integration techniques, which we will explore in the future.} 
The approach can be {further} generalised 
to optimal control problems with general second order dynamical constraints and with \flora{a} more general Lagrange term. Moreover, different regularity requirements are also an interesting question in future works. It would be of particular {importance} 
to consider 
larger classes of control, such as piecewise continuous or essentially bounded measurable functions. This can be {crucial} 
for applications where it is not possible to have smooth controls. \rodrigo{In regards to numerical integration, we hope that this new approach will let us naturally apply further developments in the field, such as} high-order variational integrators as well as multirate integrators to the optimal control problem. An extension \rodrigo{to} multisymplectic dynamics could also provide an interesting research area. Optimal time problems could be handled via an extended Lagrangian viewpoint. 

\section*{Acknowledgements}
The authors acknowledge the support of Deutsche Forschungsgemeinschaft (DFG) with the projects: LE 1841/12-1, AOBJ: 692092 and OB 368/5-1, AOBJ: 692093.


{\footnotesize{
		\makeatletter
		\renewenvironment{thebibliography}[1]
		{%
			\@mkboth{\MakeUppercase\refname}{\MakeUppercase\refname}%
			\list{\@biblabel{\@arabic\c@enumiv}}%
			{\settowidth\labelwidth{\@biblabel{#1}}%
				\leftmargin\labelwidth
				\advance\leftmargin\labelsep
				\@openbib@code
				\usecounter{enumiv}%
				\let\p@enumiv\@empty
				\renewcommand\theenumiv{\@arabic\c@enumiv}}%
			\sloppy
			\clubpenalty4000
			\@clubpenalty \clubpenalty
			\widowpenalty4000%
			\sfcode`\.\@m}
		{\def\@noitemerr
			{\@latex@warning{Empty `thebibliography' environment}}%
			\endlist}
		\makeatother

		\bibliographystyle{wmaainf}
		
		}}

\begingroup\scriptsize

\makeatletter
\renewcommand\@openbib@code{\itemsep\z@}

\bibliography{bib_OCP}

\begin{thebibliography}{Ober~08b}

\bibitem[Arno~78]{arnold78}
V.~Arnold.
\newblock {\it Mathematical Methods of Classical Mechanics}.
\newblock Springer, 1978.

\bibitem[Bett~10]{betts2010}
J.~T. Betts.
\newblock {\it Practical {{Methods}} for {{Optimal Control}} and {{Estimation
  Using Nonlinear Programming}}, {{Second Edition}}}.
\newblock {\it Advances in {{Design}} and {{Control}}}, {Society for Industrial
  and Applied Mathematics, Philadelphia, PA}, Jan. 2010.

\bibitem[Bonn~04]{bonnans04}
J.~Bonnans and J.~Laurent-Varin.
\newblock ``Computation of order conditions for symplectic partitioned
  {R}unge-{K}utta schemes with application to optmal control''.
\newblock {\it INRIA report}, Vol.~N. 5398, pp.~1--21, 2004.

\bibitem[Camp~15]{campos15}
C.~M. Campos, S.~Ober-Bl\"{o}baum, and E.~Tr\'{e}lat.
\newblock ``High order variational integrators in the optimal control of
  mechanical systems''.
\newblock {\it Discrete Contin. Dyn. Syst.}, Vol.~35, No.~9, pp.~4193--4223,
  2015.

\bibitem[Clar~90]{clarke1990}
F.~H. Clarke.
\newblock {\it Optimization and Nonsmooth Analysis}.
\newblock {SIAM}, 1990.

\bibitem[Colo~10]{ColomboDeDiegoZucalli}
L.~Colombo, D.~Mart\'{\i}n De~Diego, and M.~Zuccalli.
\newblock ``Optimal control of underactuated mechanical systems: a geometric
  approach''.
\newblock {\it J. Math. Phys.}, Vol.~51, No.~8, pp.~083519, 24, 2010.

\bibitem[Colo~16]{Colombo2016}
L.~Colombo, S.~Ferraro, and D.~Mart\'{\i}n~de Diego.
\newblock ``Geometric integrators for higher-order variational systems and
  their application to optimal control''.
\newblock {\it J. Nonlinear Sci.}, Vol.~26, No.~6, pp.~1615--1650, 2016.

\bibitem[Cram~86]{CrampinPirani86}
M.~Crampin and F.~A.~E. Pirani.
\newblock {\it Applicable differential geometry}.
\newblock Vol.~59 of {\it London Mathematical Society Lecture Note Series},
  Cambridge University Press, Cambridge, 1986.

\bibitem[de L~07]{deleon2007}
M.~{de Le{\'o}n}, D.~M. {de Diego}, and A.~{Santamar{\'i}a-Merino}.
\newblock ``Discrete Variational Integrators and Optimal Control Theory''.
\newblock {\it Advances in Computational Mathematics}, Vol.~26, pp.~251--268,
  2007.

\bibitem[Form~10]{formalskii10}
A.~Formalskii.
\newblock ``On the Design of Optimal Feedback Control for Systems of Second
  Order''.
\newblock {\it Applied Mathematics}, Vol.~1, No.~4, 2010.

\bibitem[Gerd~03]{gerdts2003}
M.~Gerdts.
\newblock ``Direct Shooting Method for the Numerical Solution of Higher-Index
  {{DAE}} Optimal Control Problems''.
\newblock {\it Journal of Optimization Theory and Applications}, Vol.~117,
  No.~2, pp.~267--294, 2003.

\bibitem[Gota~79]{gotay1979}
M.~J. Gotay and J.~M. Nester.
\newblock ``Presymplectic Lagrangian Systems. {{I}} : The Constraint Algorithm
  and the Equivalence Theorem''.
\newblock {\it Annales de l'I.H.P. Physique th\'eorique}, Vol.~30, No.~2,
  pp.~129--142, 1979.

\bibitem[Grab~13]{GrabowskaGrabowski}
K.~Grabowska and J.~Grabowski.
\newblock ``Tulczyjew triples: from statics to field theory''.
\newblock {\it J. Geom. Mech.}, Vol.~5, No.~4, pp.~445--472, 2013.

\bibitem[Hage~00]{hager00}
W.~Hager.
\newblock ``{R}unge-{K}utta methods in optimal control and the transformed
  adjoint system''.
\newblock {\it Numer.~Math.}, Vol.~87, No.~2, pp.~247--282, 2000.
\newblock DOI: 10.1007/s002110000178.

\bibitem[Leok~22]{leok_tran22}
M.~Leok and B.~Tran.
\newblock ``Geometric methods for adjoint systems''.
\newblock {\it arXiv:2205.02901v1}, 2022.

\bibitem[Leon~85]{deLeonRodrigues85}
M.~de~Le\'{o}n and P.~R. Rodrigues.
\newblock {\it Generalized classical mechanics and field theory}.
\newblock Vol.~112 of {\it North-Holland Mathematics Studies}, North-Holland
  Publishing Co., Amsterdam, 1985.
\newblock A geometrical approach of Lagrangian and Hamiltonian formalisms
  involving higher order derivatives, Notes on Pure Mathematics, 102.

\bibitem[Libe~12]{liberzon12}
D.~Liberzon.
\newblock {\it Calculus of variations and optimal control theory -- a concise
  introduction}.
\newblock Princeton University Press, 2012.

\bibitem[Mars~01]{marsden01}
J.~Marsden and M.~West.
\newblock ``Discrete mechanics and variational integrators''.
\newblock {\it Acta Numerica}, Vol.~10, pp.~357--514, 2001.

\bibitem[Mars~94]{marsden94}
J.~Marsden and T.~Ratiu.
\newblock {\it Introduction to Mechanics and Symmetry. A Basic Exposition of
  Classical Mechanical Systems}.
\newblock {\it Texts in Applied Mathematics 17}, Springer, 1994.

\bibitem[Ober~08a]{ober-blobaum2008a}
S.~{Ober-Bl{\"o}baum}, O.~Junge, and J.~E. Marsden.
\newblock ``Discrete Mechanics and Optimal Control: An Analysis''.
\newblock 2008.

\bibitem[Ober~08b]{ober-blobaum2008}
S.~{Ober-Bl{\"o}baum}.
\newblock ``Discrete Mechanics and Optimal Control''.
\newblock {\it University of Paderborn}, 2008.

\bibitem[Pont~64]{pontryagin1964}
L.~Pontryagin, V.~Boltyansky, R.~Gamkrelidze, and E.~Mishchenko.
\newblock ``The Mathematical Theory of Optimal Processes''.
\newblock {\it {Macmillan, New York}}, 1964.

\bibitem[Pope~01]{PopescuAnchored}
M.~Popescu and P.~Popescu.
\newblock ``Geometric objects defined by almost Lie structures''.
\newblock In: {\it Lie Algebroids}, Banach Center Publications, 2001.

\bibitem[Sanz~15]{sanz-serna2015}
J.~M. {Sanz-Serna}.
\newblock ``Symplectic {{Runge-Kutta}} Schemes for Adjoint Equations, Automatic
  Differentiation, Optimal Control and More''.
\newblock 2015.

\bibitem[Saun~89]{Saunders89}
D.~J. Saunders.
\newblock {\it The geometry of jet bundles}.
\newblock Vol.~142 of {\it London Mathematical Society Lecture Note Series},
  Cambridge University Press, Cambridge, 1989.

\bibitem[Trea~14]{treanta14}
S.~Treanta.
\newblock ``Optimal control problems on higher order jet bundles''.
\newblock {\it The International Conference ”Differential Geometry -
  Dynamical Systems”, Balkan Society of Geometers, Geometry Balkan Press,
  Bucharest-Romania}, pp.~181--192, 2014.

\bibitem[Tulc~76a]{TulczyjewHam}
W.~M. Tulczyjew.
\newblock ``Les sous-vari\'{e}t\'{e}s lagrangiennes et la dynamique
  hamiltonienne''.
\newblock {\it C. R. Acad. Sci. Paris S\'{e}r. A-B}, Vol.~283, No.~1, pp.~Ai,
  A15--A18, 1976.

\bibitem[Tulc~76b]{TulczyjewLag}
W.~M. Tulczyjew.
\newblock ``Les sous-vari\'{e}t\'{e}s lagrangiennes et la dynamique
  lagrangienne''.
\newblock {\it C. R. Acad. Sci. Paris S\'{e}r. A-B}, Vol.~283, No.~8, pp.~Av,
  A675--A678, 1976.

\end{thebibliography}


\begin{thebibliography}{aaaaaaaaaa}




   \end{thebibliography}
\endgroup

\end{document}